\theoremstyle{plain}
\newtheorem{thm}{Theorem}
\newtheorem{lem}[thm]{Lemma}
\newcommand{\ceil}[1]{\lceil{#1}\rceil}
\newcommand{\arXiv}[1]{arXiv:\,\href{http://arxiv.org/abs/#1}{#1}}
\newcommand{\msn}[1]{MR:\,\href{http://www.ams.org/mathscinet-getitem?mr=MR#1}{#1}}
\newcommand{\doi}[1]{doi:\,\href{http://dx.doi.org/#1}{#1}}
\begin{document}
\renewcommand{\thefootnote}{\fnsymbol{footnote}}	
\title{\bf Hypergraph Colouring and Degeneracy}
\author{David~R.~Wood\,\footnotemark[1]}
\date{10 October 2013; revised \today}
\maketitle

\begin{abstract}
A hypergraph is \emph{$d$-degenerate} if every subhypergraph has a vertex of degree at most $d$. A greedy algorithm colours every such hypergraph with at most $d+1$ colours. We show that this bound is tight, by constructing an $r$-uniform $d$-degenerate hypergraph with chromatic number $d+1$ for all $r\geq2$ and $d\geq1$. Moreover, the hypergraph is triangle-free, where a \emph{triangle} in an $r$-uniform hypergraph consists of three edges whose union is a set of $r+1$ vertices. 
\end{abstract}

\footnotetext[1]{School of Mathematical Sciences, Monash University, Melbourne, Australia
  (\texttt{david.wood@monash.edu}). Research supported by  the Australian Research Council.}

\renewcommand{\thefootnote}{\arabic{footnote}}

\section{Introduction}

\citet{EL75} proved the following fundamental result about colouring hypergraphs\footnote{A  \emph{hypergraph} $G$ consists of a set $V(G)$ of \emph{vertices} and a set $E(G)$ of subsets of $V(G)$ called \emph{edges}. A hypergraph is \emph{$r$-uniform} if every edge has size $r$. A \emph{graph} is a $2$-uniform hypergraph. A hypergraph $H$ is a \emph{subhypergraph} of a hypergraph $G$ if $V(H)\subseteq V(G)$ and $E(H)\subseteq E(G)$. A \emph{colouring} of a hypergraph $G$ assigns one colour to each vertex in $V(G)$ such that no edge in $E(G)$ is monochromatic. The \emph{chromatic number} of $G$, denoted by $\chi(G)$, is the minimum number of colours in a colouring of $G$. A colouring of $G$ can be thought of as a partition of $V(G)$ into  \emph{independent sets}, each containing no edge. The \emph{degree} of a vertex $v$ is the number of edges that contain $v$. See the textbook of \citet{BergeBook} for other notions of degree in a hypergraph.}

\begin{thm}[\citep{EL75}]
\label{EL75}
For fixed $r$, every $r$-uniform hypergraph with maximum degree $\Delta$ has chromatic number at most $O(\Delta^{1/(r-1)})$.
\end{thm}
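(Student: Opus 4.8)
The plan is to prove Theorem~\ref{EL75} by the probabilistic method, specifically the symmetric Lovász Local Lemma. Let $G$ be an $r$-uniform hypergraph with maximum degree $\Delta$. Fix a positive integer $k$ to be specified later, and colour each vertex of $G$ independently and uniformly at random with one of $k$ colours. For each edge $f\in E(G)$, let $A_f$ denote the ``bad'' event that every vertex of $f$ receives the same colour. Since $f$ has exactly $r$ vertices, the probability that they all get one fixed colour is $k^{-r}$, and summing over the $k$ colours gives $\Pr[A_f]=k\cdot k^{-r}=k^{1-r}$.

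Next I would bound the dependencies among these events. The event $A_f$ is a function only of the colours assigned to the $r$ vertices of $f$; hence $A_f$ is mutually independent of the family of all events $A_g$ with $g\cap f=\emptyset$. Because every vertex lies in at most $\Delta$ edges, each of the $r$ vertices of $f$ belongs to at most $\Delta-1$ edges other than $f$, so $f$ meets at most $r(\Delta-1)$ other edges. Thus each $A_f$ is independent of all but at most $D:=r(\Delta-1)$ of the remaining events.

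With $p:=k^{1-r}$ and this dependency degree $D$, the symmetric Lovász Local Lemma guarantees that $\Pr[\bigcap_{f}\overline{A_f}]>0$ --- i.e.\ that a proper colouring exists --- provided $\mathrm{e}\,p\,(D+1)\le 1$, where $\mathrm{e}$ is Euler's number. Since $D+1\le r\Delta$, it suffices to have $\mathrm{e}\,r\,\Delta\,k^{1-r}\le 1$, equivalently $k\ge(\mathrm{e}\,r\,\Delta)^{1/(r-1)}$. Choosing $k=\ceil{(\mathrm{e}\,r\,\Delta)^{1/(r-1)}}$ therefore yields, with positive probability, a colouring in which no edge is monochromatic, so $\chi(G)\le k$. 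As $r$ is fixed, this bound is of order $(\mathrm{e}\,r)^{1/(r-1)}\,\Delta^{1/(r-1)}=O(\Delta^{1/(r-1)})$, as required.

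I do not expect a serious obstacle here: the argument is a textbook application of the Local Lemma, and the only points requiring care are the exact probability computation $\Pr[A_f]=k^{1-r}$ and the clean bound $D\le r(\Delta-1)$ on the dependency degree. The one genuine subtlety is that the hidden constant in the $O(\cdot)$ depends on $r$, which is harmless because $r$ is fixed in the statement. A sharper constant could be extracted from the general (asymmetric) form of the Local Lemma, but the symmetric version already delivers the claimed order of magnitude.
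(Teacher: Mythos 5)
Your proof is correct, and since the paper only states Theorem~\ref{EL75} as a cited result of \citet{EL75} without reproving it, your symmetric Local Lemma argument is in fact essentially the source's own approach---that is the very paper in which the Local Lemma was introduced, and applying it to the monochromatic-edge events with $\Pr[A_f]=k^{1-r}$, dependency degree at most $r(\Delta-1)$, and $k=\ceil{(\mathrm{e}r\Delta)^{1/(r-1)}}$ is the classical derivation. All three of those quantities are computed correctly, so there is nothing to repair.
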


Theorem~\ref{EL75} implies that every $r$-uniform hypergraph with maximum degree $\Delta$ has an independent set of size at least  $\Omega(n/\Delta^{1/(r-1)})$. \citet{Spencer} proved the following stronger bound.

\begin{thm}[\citep{Spencer}]
\label{Spencer}
For fixed $r$, every $r$-uniform hypergraph with $n$ vertices and average degree $d$ has an independent set of size at least $\Omega(n/d^{1/(r-1)})$.
\end{thm}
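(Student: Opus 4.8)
The plan is to use the probabilistic deletion (alteration) method. The key conceptual point — and the reason this strengthens the max-degree estimate implicit in Theorem~\ref{EL75} — is that the deletion step only ever pays for the \emph{total} number of edges, so the argument never sees the maximum degree at all and therefore naturally produces an average-degree bound. Let $G$ be an $r$-uniform hypergraph with $n$ vertices and $m$ edges. First I would record the double-counting identity $\sum_{v\in V(G)} \deg(v) = \sum_{e\in E(G)} |e| = rm$, so that the average degree is $d = rm/n$ and hence $m = dn/r$; this is the only place the average degree enters.

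Next, fix a parameter $p\in[0,1]$ to be chosen later, and let $S\subseteq V(G)$ be the random set obtained by including each vertex independently with probability $p$. By linearity of expectation, $\mathbb{E}[|S|]=pn$, and since each fixed edge has $r$ distinct vertices, the probability that all of them lie in $S$ is exactly $p^r$, so the expected number of edges contained in $S$ is $p^r m$. Now delete one vertex from each edge that happens to be contained in $S$; the resulting set $S'$ contains no edge and hence is independent, while at most one vertex is removed per contained edge. Taking expectations gives
\begin{equation}
\mathbb{E}[|S'|] \;\geq\; \mathbb{E}[|S|] - \big(\text{expected number of edges contained in }S\big) \;=\; pn - p^r m .
\end{equation}
Since a random variable meets its expectation on some outcome, there exists an independent set of size at least $pn - p^r m$.

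It then remains to optimise over $p$. Differentiating $pn-p^r m$ yields the critical value $p^{r-1}=n/(rm)=1/d$, that is $p=d^{-1/(r-1)}$, which lies in $[0,1]$ exactly when $d\geq 1$. Substituting and using the clean cancellation $p^r m = p\cdot p^{r-1}m = p\cdot(m/d) = pn/r$, the bound collapses to $pn-pn/r = (1-\tfrac1r)\,n\,d^{-1/(r-1)}$, which is $\Omega(n/d^{1/(r-1)})$ for fixed $r$. The only genuine technicalities are verifying $p\leq 1$ and disposing of the complementary range $d<1$, which is degenerate: there $m<n/r$, so deleting one vertex per edge already leaves at least $n(1-\tfrac1r)=\Omega(n)$ independent vertices. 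I do not expect a serious obstacle here — the method is short and robust; the main thing to get right is the cancellation in the final step and the observation that the estimate depends on $m$ alone, which is precisely what upgrades a maximum-degree conclusion to the sharper average-degree statement.
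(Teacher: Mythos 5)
Your proposal is correct, but note that the paper does not prove Theorem~\ref{Spencer} at all---it is stated as a quoted result with the proof residing in Spencer's cited 1972 paper; your argument is precisely the classical deletion-method proof of that result, including the correct double count $d=rm/n$, the optimal choice $p=d^{-1/(r-1)}$ yielding the constant $(1-\tfrac1r)$, and the proper disposal of the degenerate range $d<1$. Since this matches the standard (and essentially Spencer's) argument, there is nothing to flag.
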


A hypergraph is \emph{$d$-degenerate} if every subhypergraph has a vertex of degree at most $d$. A minimum-degree-greedy algorithm colours every $d$-degenerate hypergraph with at most $d+1$ colours. This bound is tight for graphs ($r=2$) since the complete graph on $d+1$ vertices is $d$-degenerate, and of course, has chromatic number $d+1$. However, this observation does not generalise for $r\geq3$. In particular, for the complete $r$-uniform hypergraph on $n$ vertices, every vertex has degree $\binom{n-1}{r-1}$, yet the chromatic number is $\ceil{\frac{n}{r-1}}$. Thus for $r\geq 3$, the degeneracy is much greater than the chromatic number.

Given Theorems~\ref{EL75} and \ref{Spencer}, it seems plausible that for $r\geq 3$, every $r$-uniform $d$-degenerate hypergraph is $o(d)$-colourable. It even seems possible that  every $r$-uniform $d$-degenerate hypergraph is $O(d^{1/(r-1)})$-colourable. This  natural strengthening of Theorems~\ref{EL75} and \ref{Spencer} would (roughly) say that $G$ can be partitioned into independent sets, whose average size is that guaranteed by Theorem~\ref{Spencer}. 

This note rules out these possibilities, by showing that the naive upper bound  $\chi\leq d+1$ is tight for all $r$. This is the main conclusion of this paper. Moreover, we prove it for triangle-free hypergraphs, where a \emph{triangle} in an $r$-uniform hypergraph consists of three edges whose union is a set of $r+1$ vertices. Observe that this definition with $r=2$ is equivalent to the standard notion of a triangle in a graph (although there are other notions of a triangle in a hypergraph \citep{CM13}). 

\begin{thm}
\label{Main}
For all $r\geq2$ and $d\geq 1$ there is a triangle-free $d$-degenerate $r$-uniform hypergraph with chromatic number $d+1$.
\end{thm}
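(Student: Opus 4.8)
The plan is to fix $r$ and induct on $d$. The greedy bound already gives $\chi\le d+1$ on any $d$-degenerate hypergraph, so the task is to exhibit, for each $d$, an $r$-uniform hypergraph $G_d$ that is triangle-free, has degeneracy exactly $d$, and satisfies $\chi(G_d)\ge d+1$. Note that $\chi\le\text{degeneracy}+1$ forces the degeneracy to be exactly $d$, so the construction must be extremal: it must push the chromatic number as high as the degeneracy permits. For the base case $d=1$ I would take $G_1$ to be a single edge on $r$ vertices, which is $1$-degenerate, trivially triangle-free, and has $\chi=2$. Throughout I would use the convenient reformulation of triangle-freeness: a hypergraph is triangle-free exactly when no set of $r+1$ vertices contains three of its edges, i.e.\ every $(r+1)$-set spans at most two edges.

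For the inductive step I would build $G_{d+1}$ from copies of $G_d$ (and possibly of the lower $G_i$) by adjoining a colour-forcing apparatus centred on one new vertex $v$. The design goal is an explicit elimination ordering in which $v$ comes last and closes exactly $d+1$ edges $T_1\cup\{v\},\dots,T_{d+1}\cup\{v\}$, with each $T_i$ an $(r-1)$-set inside the attached gadget; this makes the degeneracy rise to exactly $d+1$. The chromatic lower bound would then follow if one can guarantee that in every proper $(d+1)$-colouring the $(r-1)$-sets $T_1,\dots,T_{d+1}$ collectively realise all $d+1$ colours as monochromatic sets, for then no colour remains available at $v$ and $G_{d+1}$ is not $(d+1)$-colourable.

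The hard part will be forcing this ``rainbow at $v$'' while respecting the two structural constraints. The naive localisation---force each port $T_i$ to be monochromatic and link the ports pairwise by a ``difference'' edge $f_{ij}\subseteq T_i\cup T_j$ (which is monochromatic precisely when two equal-coloured ports collide)---cannot be the whole story: when $r=2$ the ports are single vertices and the difference edges turn them, together with $v$, into a $K_{d+2}$, destroying triangle-freeness, while for larger $r$ the port-monochromaticity sub-gadgets threaten to raise the degeneracy above $d+1$. Thus the forcing must be global rather than local, in the spirit of the way an odd cycle $C_5$ certifies $\chi=3$ with no triangle and degeneracy $2$: many interlinked lower-order copies should be arranged so that a parity- or counting-type obstruction rules out every $(d+1)$-colouring. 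Getting this global obstruction to coexist with (i) a bounded closure degree of $d+1$ at every vertex and (ii) the ``at most two edges per $(r+1)$-set'' condition at the interfaces between copies is, I expect, the crux of the argument.

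Finally, with the construction fixed I would verify the three properties separately. Degeneracy exactly $d+1$ follows by displaying the elimination ordering ($v$ last, the gadget vertices ordered by the inductive orderings of the copies, each closing at most $d+1$ edges). Triangle-freeness follows from the $(r+1)$-set criterion: inside any single copy it holds by induction, so one only checks the finitely many interface configurations created by the new edges $T_i\cup\{v\}$ and the linking edges, arguing that any $(r+1)$-set meeting two copies is too spread out to contain three edges. The bound $\chi(G_{d+1})\ge d+2$ is exactly the rainbow-forcing argument above, and $\chi(G_{d+1})\le d+2$ is the greedy bound for the now $(d+1)$-degenerate hypergraph; induction then completes the proof.
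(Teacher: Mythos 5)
You have set the frame correctly (induction on $d$, the greedy upper bound $\chi\le d+1$, new vertices attached along monochromatic $(r-1)$-sets, the $(r+1)$-set criterion for triangle-freeness), but there is a genuine gap, and you flag it yourself: the ``global obstruction'' forcing all $d+1$ colours to appear as monochromatic ports at the new vertex is never constructed, and without it there is no lower bound on the chromatic number, so the proposal is a plan rather than a proof. The missing idea in the paper is not a parity or counting gadget but a strengthening of the induction hypothesis: one proves there is a triangle-free $d$-degenerate $r$-uniform $G_d$ with $\chi(G_d)=d+1$ such that \emph{in every $(d+1)$-colouring of $G_d$ each colour class has at least $r-1$ vertices}. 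With this invariant the monochromatic ports come for free, with no port-forcing sub-gadgets at all. Concretely, $G_d$ consists of $d+r-2$ disjoint copies $H_1,\dots,H_{d+r-2}$ of $G_{d-1}$, and for \emph{every} set $S$ meeting exactly $d$ of the copies in exactly $r-1$ vertices each (and the others not at all), one adds $r-1$ new vertices $v_1,\dots,v_{r-1}$ together with the edges $(S\cap V(H_i))\cup\{v_j\}$. Each new vertex then has degree exactly $d$ while the union of copies is $(d-1)$-degenerate, so $G_d$ is $d$-degenerate; triangle-freeness holds because every new edge contains a new vertex $v$ lying in only one edge inside $V(H_i)\cup\{v\}$, whereas every vertex of a triangle lies in at least two of its edges. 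Your (correct) worries about difference edges creating $K_{d+2}$ when $r=2$, or port-monochromaticity gadgets inflating the degeneracy, simply never arise.

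The chromatic lower bound is then a cardinality contradiction rather than a direct non-colourability argument: if some $(d+1)$-colouring had a colour class (say blue) of size at most $r-2$, then at most $r-2$ of the copies meet blue, so some $d$ copies, say $H_1,\dots,H_d$, are properly $d$-coloured with the remaining colours $1,\dots,d$; by the strengthened hypothesis each $H_i$ contains a monochromatic $(r-1)$-set $S_i$ of colour $i$; and the $r-1$ new vertices attached to $S=S_1\cup\dots\cup S_d$ are forced blue by the edges $S_i\cup\{v_j\}$, giving at least $r-1$ blue vertices, a contradiction. Since a $d$-colouring is a $(d+1)$-colouring with an empty class (of size $0\le r-2$), this also yields $\chi(G_d)>d$. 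Note two further divergences from your sketch: your base case (a single edge) proves the theorem for $d=1$ but fails the invariant when $r\ge 3$, so the paper instead uses the tight path $e_i=\{v_i,\dots,v_{i+r-1}\}$ on $r(r-1)$ vertices, which contains $r-1$ pairwise disjoint edges and hence uses each colour at least $r-1$ times in any $2$-colouring; and the quantification is over \emph{all} admissible attachment sets $S$ (with $r-1$ new vertices per set, and $d+r-2$ copies to absorb the at most $r-2$ blue-touched copies), which is exactly how the adversary colouring is prevented from dodging---the step you correctly identified as the crux but left unresolved.
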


Theorem~\ref{Main} and its proof is a generalisation of a result of \citet{AKS99} who proved it for graphs ($r=2$). Of course, the complete graph $K_{d+1}$ is $d$-degenerate with chromatic number $d+1$. The triangle-free property was the main conclusion of their result. See \citep{KR-RSA10,Alon85} for other related results.

\section{Proof}

Theorem~\ref{Main} is a corollary of the following:

\begin{lem}
\label{MainMain}
Fix $r\geq 2$. For all $d\geq 1$ there is a triangle-free $d$-degenerate $r$-uniform hypergraph $G_d$ with chromatic number $d+1$, such that in every $(d+1)$-colouring of $G_d$ each colour is assigned to at least $r-1$ vertices.
\end{lem}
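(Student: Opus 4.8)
The plan is to induct on $d$, carrying the strengthened counting conclusion as the inductive hypothesis; for $r=2$ this reduces to the construction of \citet{AKS99}, the extra ``at least $r-1$'' clause being vacuous there and being exactly the device that lets the argument survive for larger $r$.

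For the base case $d=1$ I would take $G_1$ to be the disjoint union of $r-1$ edges. It is $1$-degenerate and triangle-free (three disjoint edges span $3r>r+1$ vertices, and for $r\le 3$ there are not even three edges), and $\chi(G_1)=2$ since it has an edge and is $2$-colourable. For the counting clause, note that under $2$ colours an $r$-edge is non-monochromatic iff it meets both classes, so each colour class is a transversal; as the $r-1$ edges are disjoint the transversal number equals $r-1$, and hence each of the two colours is used at least $r-1$ times.

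For the inductive step, given $G_d$ I would build $G_{d+1}$ from many disjoint copies of $G_d$ together with an apex vertex $v$ (for $r\ge 3$, an apex together with the $(r-1)$-sets it probes), placed \emph{first} in a degeneracy ordering. Placing $v$ first is the degeneracy bookkeeping: each new edge has all of its non-apex vertices inside a single copy and appearing after $v$, so it contributes a back-edge only to its last vertex; every vertex of a copy thereby gains at most one back-edge and the degeneracy climbs from $d$ to exactly $d+1$. The forcing itself is the clean part: in any $(d+1)$-colouring the restriction to a copy of $G_d$ is a proper $(d+1)$-colouring, so since $\chi(G_d)=d+1$ all $d+1$ colours appear there, and by the inductive hypothesis every colour — in particular the colour of $v$ — occurs on a monochromatic $(r-1)$-set $T$; if $v$ is joined to such a $T$ by the edge $\{v\}\cup T$, that edge is monochromatic, a contradiction. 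Thus $v$ needs a fresh colour and $\chi(G_{d+1})\ge d+2$, while colouring $v$ with a new colour and each copy optimally gives $\chi(G_{d+1})\le d+2$.

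The hard part is reconciling this forcing with triangle-freeness. Joining $v$ to \emph{every} $(r-1)$-subset of every copy forces the new colour at once but creates triangles, since three $(r-1)$-subsets of a common $r$-set $R$ produce an edge-union $\{v\}\cup R$ of size $r+1$; conversely, thinning the probe-sets until they are triangle-safe (say pairwise disjoint) lets an adversary colour each copy so that no probed set is monochromatic in $v$'s colour, and the forcing collapses. Resolving this tension is where the many disjoint copies and a carefully structured family of probe-edges must enter, generalizing the selection of \citet{AKS99} for $r=2$ (where $r-1=1$ makes every vertex a monochromatic probe, so the entire difficulty is forcing the neighbourhood of $v$ to realise all $d+1$ colours); I expect this to be the main obstacle. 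Finally one must re-establish the counting clause for $G_{d+1}$ — that every $(d+2)$-colouring uses each colour at least $r-1$ times — by a transversal/counting argument analogous to the base case, since this clause is what keeps the induction self-sustaining.
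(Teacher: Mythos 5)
Your base case is correct and in fact simpler than the paper's (the paper uses a ``tight path'' with edges $e_i=\{v_i,\dots,v_{i+r-1}\}$ on $r(r-1)$ vertices, but its counting argument extracts exactly your $r-1$ pairwise disjoint edges). The genuine gap is the inductive step: you correctly isolate the tension between the forcing and triangle-freeness and then leave it unresolved --- so the proof is missing precisely its crux. Moreover, the single-apex scheme cannot be repaired as stated, for two reasons. First, the forcing requires an edge from the apex to \emph{every} potentially monochromatic $(r-1)$-set, while your degeneracy bookkeeping (apex placed first, each copy vertex gaining at most one back-edge) requires the probed $(r-1)$-sets to be essentially disjoint within a copy for $r\geq 3$; these demands are incompatible, as you note. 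Second, even if one apex were successfully forced, it yields only \emph{one} vertex of the new colour, so it cannot re-establish the counting clause that every colour is used at least $r-1$ times --- and that clause is not an optional add-on to be recovered ``by a transversal argument analogous to the base case''; it is the engine of the whole induction.

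The paper resolves the tension by relocating where the degree is paid. To build $G_d$ from $G_{d-1}$, take $d+r-2$ disjoint copies $H_1,\dots,H_{d+r-2}$, and for \emph{every} set $S$ with $|S\cap V(H_i)|\in\{0,r-1\}$ meeting exactly $d$ copies, add $r-1$ brand-new vertices $v_1,\dots,v_{r-1}$ and the edges $(S\cap V(H_i))\cup\{v_j\}$. Three things happen at once. (a) Every new edge contains exactly one new vertex, and each new vertex has total degree exactly $d$ (its $d$ edges go to $d$ \emph{different} copies, one edge per colour to be killed, rather than many probes into one copy); hence degeneracy follows directly from the subhypergraph definition --- any subhypergraph either contains a new vertex, of degree at most $d$, or lies inside the $(d-1)$-degenerate union of copies --- with no ordering bookkeeping, and copy vertices are free to lie in arbitrarily many probe edges. (b) Triangle-freeness survives even though all relevant $(r-1)$-subsets are probed: every vertex of a triangle lies in at least two of its three edges (the three edges of a triangle omit three distinct vertices of the $(r+1)$-set), yet each new vertex $v$ lies in only one edge inside $V(H_i)\cup\{v\}$. (c) The counting clause drives the contradiction: if some colour ``blue'' appears on at most $r-2$ vertices, then at most $r-2$ copies meet blue, leaving $d$ copies coloured with the other $d$ colours; by induction, copy $H_i$ contains a monochromatic $(r-1)$-set $S_i$ in colour $i$, and then \emph{all} $r-1$ new vertices attached to $S=S_1\cup\dots\cup S_d$ are forced blue --- contradicting that blue is rare. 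Note the calibration: $d+r-2$ copies is exactly what makes ``some colour has at most $r-2$ vertices'' yield $d$ clean copies, and adding $r-1$ new vertices per probe system $S$ (rather than one apex) is exactly what makes the forced colour class have size at least $r-1$, keeping the induction self-sustaining. Your sketch has the right skeleton (disjoint copies, probing monochromatic $(r-1)$-sets, a forced fresh colour), but without this device --- one low-degree new vertex per edge, $r-1$ of them per probe system, spread across $d$ copies --- neither the degeneracy, nor the triangle-freeness, nor the counting clause goes through.
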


\begin{proof}
We proceed by induction on $d$. First consider the base case $d=1$. Let $n:=r(r-1)$. Let $V(G_1):=\{v_1,\dots,v_n\}$ and $E(G_1):=\{e_i:1\leq i\leq n-r+1\}$, where $e_i:=\{v_i,v_{i+1},\dots,v_{i+r-1}\}$. If $S\subseteq V(G_1)$ and $i$ is minimum such that $v_i\in S$, then $v_i$ has degree at most 1 in the subhypergraph  induced by $S$. Thus $G_1$ is $1$-degenerate. If $e_i,e_j,e_k$ are three edges in $G_1$ with $i<j<k$, then $e_i\cup e_j\cup e_k$ includes the $r+2$ distinct vertices $v_i,v_{i+1},\dots,v_{i+r-1},v_{j+r-1},v_{k+r-1}.$ Hence $G_1$ is triangle-free. Consider a 2-colouring of $G_1$. Clearly, $G_1$ contains $r-1$ pairwise disjoint edges, each of which contains vertices of both colours. Hence each colour is assigned to at least $r-1$ vertices. This completes the base case. 

Now assume that $G_{d-1}$ is a triangle-free $(d-1)$-degenerate $r$-uniform hypergraph with chromatic number $d$, such that in every $d$-colouring of $G_{d-1}$ each colour is assigned to at least $r-1$ vertices.

Initialise $G_d$ to consist of $d+r-2$ disjoint copies $H_1,\dots,H_{d+r-2}$ of $G_{d-1}$. Let $S$ be a set of $(r-1)d$ vertices in $H_1\cup\dots\cup H_{d+r-2}$ such that $|S\cap V(H_i)|\in\{0,r-1\}$ for $1\leq i\leq d+r-2$. That is, $S$ contains exactly  $r-1$ vertices from exactly $d$ of the $H_i$, and contains no vertices from the other  $r-2$. Now, for each such set $S$, add $r-1$ \emph{new} vertices $v_1,\dots,v_{r-1}$ to $G_d$ and add the \emph{new} edge $(S\cap V(H_i))\cup\{v_j\}$ to $G_d$ whenever $|S\cap V(H_i)|=r-1$. Thus each new vertex has degree $d$. Since $H_1\cup \dots\cup H_{d+r-2}$ is $d$-degenerate, $G_d$ is also $d$-degenerate.

Suppose on the contrary that $G_d$ contains a triangle $T$. Since $G_{d-1}$ is triangle-free, at least one edge in $T$ is a new edge, which is contained in $V(H_i)\cup\{v\}$ for some $i\in[1,d+r-2]$ and some new vertex $v$. Each vertex in a triangle is in at least two of the edges of the triangle. However, by construction,  $v$ is contained in only one edge contained in $V(H_i)\cup\{v\}$. Thus $G_d$ is triangle-free. 

Since  $H_1\cup\dots\cup H_{d+r-2}$ is $d$-colourable, and no edge contains only new vertices, assigning all the new vertices a $(d+1)$-th colour produces a $(d+1)$-colouring of $G_d$. Thus $\chi(G_d)\leq d+1$. 

Suppose on the contrary that $G_d$ has a $(d+1)$-colouring with at most $r-2$ vertices of some colour, say `blue'. Say the other colours are $1,\dots,d$. At most $r-2$ copies of the $H_i$ contain blue vertices. Hence, without loss of generality, $H_1,\dots,H_d$ contain no blue vertices. That is, $H_1,\dots,H_d$ are $d$-coloured with colours $1,\dots,d$. By induction,  $H_i$ contains a set $S_i$ of $r-1$ vertices coloured $i$ for $1\leq i\leq d$. By construction, there are $r-1$ vertices $v_1,\dots,v_{r-1}$ in $G_d$, such that $S_i\cup\{v_j\}$ is an edge of $G_d$ for $1\leq i\leq d$ and $1\leq j\leq r-1$. Since each such edge is not monochromatic, each vertex $v_j$ is coloured blue. In particular, there are at least $r-1$ blue vertices, which is a contradiction. Therefore, in every  $(d+1)$-colouring of $G_d$, each  colour class has  at least $r-1$ vertices, as claimed. (In particular, $G_d$ has no $d$-colouring.) 
\end{proof}

\section{An Open Problem}

We conclude with an open problem. The \emph{girth} of a graph (that contains some cycle) is the length of its shortest cycle.  \citet{Erdos59} proved that there exists a graph with chromatic number at least $k$ and girth at least $g$, for all $k\geq 3$ and $g\geq 4$. (\citet{EH66} proved an analogous result for hypergraphs). Theorem~\ref{Main} strengthens this result for triangle-free graphs (that is, with girth $g=4$). This leads to the following question: Does there exist a  $d$-degenerate graph with chromatic number $d+1$ and girth $g$, for all $d\geq 2$ and $g\geq4$? Odd cycles prove the $d=2$ case. An affirmative answer would strengthen the above result of \citet{Erdos59}. A negative answer would also be interesting---this would provide a non-trivial upper bound on the chromatic number of $d$-degenerate graphs with girth $g$. 

\subsection*{Note} After this paper was written the author discovered the beautiful paper by \citet{KN99} which proves a strengthening of Theorem~\ref{Main} and includes the positive solution of the above open problem.

\subsection*{Acknowledgement} Thanks to an anonymous referee for pointing out an error in an earlier version of this paper. 


\def\soft#1{\leavevmode\setbox0=\hbox{h}\dimen7=\ht0\advance \dimen7
  by-1ex\relax\if t#1\relax\rlap{\raise.6\dimen7
  \hbox{\kern.3ex\char'47}}#1\relax\else\if T#1\relax
  \rlap{\raise.5\dimen7\hbox{\kern1.3ex\char'47}}#1\relax \else\if
  d#1\relax\rlap{\raise.5\dimen7\hbox{\kern.9ex \char'47}}#1\relax\else\if
  D#1\relax\rlap{\raise.5\dimen7 \hbox{\kern1.4ex\char'47}}#1\relax\else\if
  l#1\relax \rlap{\raise.5\dimen7\hbox{\kern.4ex\char'47}}#1\relax \else\if
  L#1\relax\rlap{\raise.5\dimen7\hbox{\kern.7ex
  \char'47}}#1\relax\else\message{accent \string\soft \space #1 not
  defined!}#1\relax\fi\fi\fi\fi\fi\fi}

\end{document}